\newtheorem{theorem}{Theorem}[section]
\newtheorem{lemma}[theorem]{Lemma}
\newtheorem{corollary}[theorem]{Corollary}
\theoremstyle{definition}
\newtheorem{definition}[theorem]{Definition}
\theoremstyle{remark}
\numberwithin{equation}{section}
\begin{document}

\setcounter{page}{1}

\title[local derivations on Witt algebras]{local derivations on Witt algebras}

\author{Yang Chen}
\address{Mathematics Postdoctoral Research Center,
Hebei Normal University, Shijiazhuang 050016, Hebei, China}
\email{chenyang1729@hotmail.com}

\author{Kaiming Zhao}
\address{Department of Mathematics, Wilfrid
Laurier University, Waterloo, ON, Canada N2L 3C5,  and College of
Mathematics and Information Science, Hebei Normal University, Shijiazhuang 050016, Hebei, China}
\email{kzhao@wlu.ca}

\author{Yueqiang Zhao}
\address{School of
Mathematical Sciences, Hebei Normal University, Shijiazhuang 050016, Hebei, China}
\email{yueqiangzhao@163.com}



\date{}
\maketitle

\begin{abstract} In this paper, we prove that every local derivation on Witt algebras $W_n, W_n^+$ or $W_n^{++} $ is a derivation for any $n\in\mathbb{N}$. As a consequence we obtain that  every local derivation on a centerless generalized Virasoro algebra of higher rank is a derivation.

\

{\it Keywords:} Lie algebra, Witt algebra, generalized Virasoro algebra, derivation, local derivation.

{\it AMS Subject Classification:} 17B05, 17B40, 17B66.
\end{abstract}

\section{Introduction}

 Let  $A$ a Banach  (or associative) algebra, $X$ be an
$A$-bimodule. A linear mapping
$\Delta:A\to X$ is said to be a \textit{local derivation} if for
every $x$ in $A$ there exists a derivation $D_x :A\to X$,
depending on $x$, satisfying $\Delta(x) = D_x (x).$ When $X$ is taken to be $A$, such a local derivation
is called a local derivation on $A$. The   concept of
local derivation for Banach (or associative) algebras was introduced by Kadison \cite{Kad90}, Larson and Sourour \cite{LarSou} in 1990.
Since then there have been a lot of studies on local derivations on various algebras. See for example the recent papers
\cite{ AyuKud, AyuKudPer, AyuKudRak, LiuZh} and the references therein.
Local derivations on various algebras are some kind of local properties for the algebras,
which turn out to be  very interesting.
 Kadison actually proved in \cite{Kad90} that each continuous local derivation
of a von Neumann algebra $M$ into a dual Banach $M$-bimodule is a
derivation, he established the existence of local derivations on the algebra
$\mathbb{C}(x)$ of rational functions  which are not derivations and
showed that any local derivation of the polynomial ring $\mathbb{C}[x_1, \cdots, x_n]$
is a derivation.
 Recently, several papers have been devoted to studying local derivations   for Lie (super)algebras.
In \cite{AyuKudRak}, Ayupov and Kudaybergenov proved that every local derivation on a finite dimensional semisimple Lie algebra over an algebraically closed field
of characteristic 0 is automatically a derivation, and gave examples of nilpotent Lie
algebra (so-call filiform Lie algebras) which admit local derivations which are not  derivations. In \cite{Yus} and \cite{AY} Ayupov and Yusupov studied 2-local derivations on univariate Witt algebras. In \cite{Zhao}, we study 2-local derivations on
multivariate Witt algebras. In the present paper we study local derivations on Witt algebras.

Let $n\in\mathbb{N}$. The Witt algebra $W_n$ of vector fields on an $n$-dimensional torus  is the derivation Lie algebra of the Laurent
polynomial algebra $A_n=\mathbb{C}[t_1^{\pm1},t_2^{\pm1},\cdots,
t_n^{\pm1}]$. Witt algebras were one of the four classes of Cartan type Lie algebras originally introduced in 1909 by Cartan \cite{C} when he studied
infinite dimensional simple Lie algebras.
Over the last two decades, the  representation theory of Witt algebras was  extensively
studied by many mathematicians and physicists; see for example \cite{BF, BMZ, GLLZ}. Very recently, Billig and Futorny
 obtained the classification for all simple Harish-Chandra
$W_n$-modules in their remarkable paper \cite{BF}.

The present paper is arranged as follows. In Section 2 we recall some known results  and establish some related properties concerning  Witt algebras. In Section 3 we prove that every local derivation on Witt algebras $W_n$ is a derivation. As a consequence we obtain that  every local derivation on a centerless generalized Virasoro algebra of higher rank is a derivation. The methods used in \cite{AyuKudRak} for finite dimensional semisimple Lie algebras no longer work for Witt algebras since Witt algebras have very different algebraic structure with finite dimensional semisimple Lie algebras. We have to establish new methods in the proofs of this section.
Finally, in Section 4 we show that the above methods and conclusions are applicable for Witt algebras $W_n^+$ and $W_n^{++}$.

Throughout this paper, we denote by $\mathbb{Z}$, $\mathbb{N}$, $\mathbb{Z}_+$ and $\mathbb{C}$ the sets of  all integers, positive integers, non-negative integers and complex numbers respectively. All algebras are over $\mathbb{C}$.

\section{The Witt algebras}

In this section we recall definitions, symbols and  some known
results for later use in this paper.

A derivation on a Lie algebra $L$ is a linear map
$D: L\rightarrow L$ which satisfies the Leibniz
law
$$
D([x,y])=[D(x),y]+[x, D(y)],\
\forall x,y\in L.$$ The set of all derivations of
$L$   is a Lie algebra and
is denoted by $\text{Der}(L)$. Clearly derivations on $L$ are local derivations, but   the converse may not be true in general. For any $a\in L$,
the map
$$\text{ad}(a): L\to L, \ \text{ad}(a)x=[a,x],\ \forall  x\in L$$ is a derivation and derivations of this form are called
\textit{inner derivations}. The set of all inner derivations of
$L$, denoted by $\text{Inn}(L),$ is a Lie  ideal of
$\text{Der}(L)$.

For $n\in\mathbb{N}$, let $A_n= \mathbb{C}[t_1^{\pm1},t_2^{\pm1},\cdots,t_n^{\pm1}]$ be the Laurent polynomial algebra and $W_n= \text{Der}(A_n)$ be the Witt algebra of vector fields  on an $n$-dimensional torus. Thus $W_n$ has a natural structure of a left $A$-module, which is free of rank $n$. Denote  $d_1= t_1\frac{\partial}{\partial{t_1}}, \ldots, d_n= t_n\frac{\partial}{\partial{t_n}}$, which form   a basis of this $A$-module:
$$W_n=\bigoplus_{i=1}^n A_n d_i.$$
Denote  $t^\alpha=t_1^{\alpha_1}\cdots t_n^{\alpha_n}$ for $\alpha= (\alpha_1, \ldots, \alpha_n)\in \mathbb{Z}^n$ and let $\{\epsilon_1, \ldots, \epsilon_n\}$ be the standard basis of $\mathbb{Z}^n$. Then we can write the Lie bracket in $W_n$ as follows:
$$[t^\alpha d_i, t^\beta d_j]= \beta_i t^{\alpha+ \beta} d_j- \alpha_j t^{\alpha+ \beta} d_i,\ i, j= 1, \ldots, n;\ \alpha, \beta\in \mathbb{Z}^n.$$
The subspace $\mathfrak{h}$ spanned by $d_1, \ldots, d_n$ is the Cartan subalgebra of $W_n$. We may write any nonzero element in $W_n$ as $\sum_{\alpha\in S} t^\alpha d_\alpha$, where $S$ is the finite subset consisting of all  $\alpha\in  \mathbb{Z}^n$ with $d_\alpha\in \mathfrak{h}\setminus\{0\}$. For $d_\alpha= c_1 d_1+ \cdots+ c_n d_n\in \mathfrak{h}$ and $\beta=(\beta_1,\beta_2,\cdots, \beta_n)\in \mathbb{Z}^n$, define $$(d_\alpha, \beta)=c_1 \beta_1+ \cdots+ c_n \beta_n.$$ Then we get the following formula
$$[t^\alpha d_\alpha, t^\beta d_\beta]= t^{\alpha+ \beta}((d_\alpha, \beta)d_\beta- (d_\beta, \alpha)d_\alpha),\,\,\forall \alpha, \beta\in\mathbb{Z}^n, d_\beta d_\alpha\in \mathfrak{h}.$$

For convenience when we write
\begin{equation}X=\sum_{\alpha\in \mathbb{Z}^n} \sum_{i=1}^nc_{\alpha, i} t^\alpha d_i\in W_n,\end{equation}
where $c_{\alpha, i}\in\mathbb{C}$, the coefficient $c_{\alpha, i}$ will be denoted by $(X)_{t^\alpha d_i}.$
We make the convention that when $X\in W_n$ is written as in (2.1) we always assume that the sum is finite, i.e., there are only finitely many $c_{\alpha, i}$ nonzero.

\begin{definition}\label{def} \textit{We call a vector $\mu\in \mathbb{C}^n$ generic if $\mu\cdot \alpha\neq 0$ for all $\alpha\in \mathbb{Z}^n \setminus \{0\}$ , where $\mu\cdot \alpha$ is the standard inner product on $\mathbb{C}^n$.}
\end{definition}

For a generic vector    $\mu=(\mu_1,\mu_2, \cdots, \mu_n) \in \mathbb{C}^n$ let $d_\mu= \mu_1 d_1+ \cdots+ \mu_n d_n$. Then we have the Lie subalgebra of $W_n$:
$$W_n(\mu)= A_n d_\mu,$$
which is called (centerless) generalized Virasoro algebra of rank $n$, see \cite{PZ}.

From Proposition 4.1 and Theorem 4.3 in \cite{DZ} we know that
any derivation on $W_n$ is inner. Then for the Witt algebra $W_n$, the above definition of the local derivation can be reformulated as follows. A linear map $\Delta$ on $W_n$ is a local derivation on $W_n$ if
for every elements $x\in W_n$ there exists an element
$a_x \in W_n$ such that $\Delta(x) = [a_x, x]$.

\section{Local derivations on $W_n$}

In this section  we shall mainly prove the following result concerning local derivations on $W_n$ for $n\in\mathbb{N}$.

\begin{theorem}\label{thm31} \textit{Every local derivation
on the Witt algebra $W_n$ is a derivation.}
\end{theorem}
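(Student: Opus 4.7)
The plan is to leverage the fact (recalled just before the theorem) that every derivation on $W_n$ is inner, so the local-derivation condition becomes: for each $x \in W_n$ there exists $a_x \in W_n$ with $\Delta(x) = [a_x, x]$. The goal is to produce a single $a \in W_n$ such that $\Delta = \operatorname{ad}(a)$.

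First I would fix a generic vector $\mu \in \mathbb{C}^n$ in the sense of Definition~\ref{def}. Because $\operatorname{ad}(d_\mu)$ acts on $t^\beta d_j$ by the scalar $\mu \cdot \beta \neq 0$ for $\beta \neq 0$, the centralizer of $d_\mu$ in $W_n$ equals $\mathfrak{h}$. The identity $\Delta(d_\mu) = [a, d_\mu]$ thus determines $a$ uniquely modulo $\mathfrak{h}$; normalize $a$ to have no $\mathfrak{h}$-component and replace $\Delta$ by the local derivation $\Delta - \operatorname{ad}(a)$. We may then assume $\Delta(d_\mu) = 0$, and only need to show $\Delta \equiv 0$.

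Next I would show $\Delta|_{\mathfrak{h}} = 0$ via a genericity argument. For any other generic $\mu'$ such that $\mu + \mu'$ is also generic, linearity gives
\[ \Delta(d_{\mu'}) = \Delta(d_\mu + d_{\mu'}) = [c, d_\mu + d_{\mu'}] \]
for some $c = \sum c_{\beta, j} t^\beta d_j$, hence $\Delta(d_{\mu'})_{t^\beta d_j} = -c_{\beta, j}((\mu + \mu') \cdot \beta)$. Combining with $\Delta(d_{\mu'}) = [a', d_{\mu'}]$ (normalized with no $\mathfrak{h}$-component) yields $c_{\beta, j}((\mu + \mu') \cdot \beta) = a'_{\beta, j}(\mu' \cdot \beta)$, which forces $\Delta(d_{\mu'})_{t^\beta d_j} = 0$ whenever $\beta \neq 0$ and $(\mu + \mu') \cdot \beta = 0$. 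Since $\mu' \mapsto \Delta(d_{\mu'})_{t^\beta d_j} = \sum_i \mu'_i \Delta(d_i)_{t^\beta d_j}$ is linear in $\mu'$ and vanishes on a Zariski-dense subset of the affine hyperplane $\mu' \cdot \beta = -\mu \cdot \beta$ (which misses the origin because $\mu \cdot \beta \neq 0$), an elementary argument forces $\Delta(d_i)_{t^\beta d_j} = 0$ for every $i$. Combined with the observation that $\Delta(h)$ has no $\mathfrak{h}$-component for $h \in \mathfrak{h}$ (because $[c_h, h]$ lies in $\bigoplus_{\beta \neq 0} W_n^\beta$), this yields $\Delta(d_i) = 0$ for all $i$, hence $\Delta|_{\mathfrak{h}} = 0$.

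Finally, for each $t^\alpha d_i$ with $\alpha \neq 0$, a similar strategy using $\Delta(t^\alpha d_i) = \Delta(d_{\mu'} + t^\alpha d_i) = [c, d_{\mu'} + t^\alpha d_i]$ with varying generic $\mu'$, together with $\Delta(t^\alpha d_i) = [a_{\alpha, i}, t^\alpha d_i]$, should yield $\Delta(t^\alpha d_i) = 0$. The hard part will be this last step: $\operatorname{ad}(d_{\mu'} + t^\alpha d_i)$ is no longer weight-preserving, so extracting the vanishing of each coefficient of $\Delta(t^\alpha d_i)$ requires delicate combinatorial bookkeeping that simultaneously invokes genericity of $\mu'$, the explicit commutator formula for $[c, t^\alpha d_i]$, and the structural constraint that $\Delta(t^\alpha d_i)$ lies in the image of $\operatorname{ad}(t^\alpha d_i)$. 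Once the vanishing is established on every basis vector, linearity gives $\Delta \equiv 0$, so the original $\Delta = \operatorname{ad}(a)$ is a derivation.
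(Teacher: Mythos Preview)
There is a genuine gap in your normalization. After subtracting $\operatorname{ad}(a)$ with $a$ having no $\mathfrak{h}$-component, you assert that it suffices to prove $\Delta\equiv 0$. This is false: for any $h\in\mathfrak{h}$ the inner derivation $\operatorname{ad}(h)$ satisfies $\operatorname{ad}(h)(d_\mu)=0$ and $\operatorname{ad}(h)|_{\mathfrak{h}}=0$, yet $\operatorname{ad}(h)(t^\alpha d_i)=(h,\alpha)t^\alpha d_i\neq 0$ in general. Your reduction kills only the non-Cartan part of the hidden inner derivation; an $n$-dimensional ambiguity from $\mathfrak{h}$ remains. The paper addresses this by first proving (Lemma~\ref{lem32}) that once $\Delta(d_\mu)=0$ one has $\Delta(t^\alpha d_\mu)\in\mathbb{C}\,t^\alpha d_\mu$, then writing $\Delta(t_id_\mu)=c_it_id_\mu$ and performing a \emph{second} subtraction $\Delta\mapsto\Delta-\sum_i c_i\operatorname{ad}(d_i)$ before attempting to show vanishing.

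Beyond this, your final step is only a sketch, and it is precisely where the paper's real work lies. Your proposed use of $\Delta(t^\alpha d_i)=[c,\,d_{\mu'}+t^\alpha d_i]$ with varying $\mu'$ is appealing, but the hyperplane trick that succeeded for $\mathfrak{h}$ does not transfer cleanly: the operator $\operatorname{ad}(d_{\mu'}+t^\alpha d_i)$ mixes weight spaces along the $\mathbb{Z}\alpha$-string, so a single coefficient of $[c,\,d_{\mu'}+t^\alpha d_i]$ involves \emph{several} coefficients of $c$, and you cannot isolate them by sending $(\mu+\mu')\cdot\beta$ to zero. The paper's argument (Lemmas~\ref{lem32'}--\ref{lem34}) instead uses a scalar parameter $x$ in $t^\alpha d_\mu+x\,d_\mu$, obtains a recursive system along the $\alpha$-string, and eliminates to produce a polynomial identity in $x$ whose leading term forces the unwanted coefficients to vanish; a separate inductive argument (Lemmas~\ref{lem33}--\ref{lem34}) then propagates the vanishing from the $t_id_\mu$ to all $t^\alpha d_\mu$, and finally to all $t^\alpha d_\lambda$ for generic $\lambda$. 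Your outline does not supply a substitute for this machinery, and as written the claim it aims for is actually false without the missing second normalization.
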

Since  the proof of this theorem is long, we first setup  six lemmas as preparations. Let $\mu=(\mu_1,\mu_2, \cdots, \mu_n) $ be a fixed  generic vector in $\mathbb{C}^n$ and $d_\mu= \mu_1 d_1+ \cdots+ \mu_n d_n$.
For a given $\alpha\in \mathbb{Z}^n$, we define an equivalence relation $\stackrel{\alpha}{\sim}$ on $\mathbb{Z}^n$  {for } $\beta, \gamma\in \mathbb{Z}^n$:
$$ \beta\stackrel{\alpha}{\sim} \gamma\ \text{if and only if}\ \gamma- \beta= k\alpha,\ \text{for some }\ k\in \mathbb{Z}.$$
Let $[\gamma]:= \{\beta\in \mathbb{Z}^n\mid \gamma\stackrel{\alpha}{\sim} \beta\}$ denote the equivalence class containing $\gamma$. The set of equivalence classes of $\mathbb{Z}^n$ defined by $\alpha$ is denoted by
$\mathbb{Z}^n/ \alpha$.

Let $\Delta$ be a local derivation on $W_n$ with  $\Delta(d_\mu)= 0$. For $t^\alpha d_\mu$ with $\alpha\ne0$, since $\Delta$ is a local derivation there is  an element $a= \sum_{\beta\in \mathbb{Z}^n} t^\beta d''_\beta\in W_n$, where $d''_{\beta}\in \mathfrak{h} $, such that
\begin{equation}
\Delta(t^\alpha d_\mu)= [a, t^\alpha d_\mu]= \sum_{[\gamma]\in F}\sum_{k= p_\gamma}^{q_\gamma}t^{\gamma+ k\alpha} d_{\gamma+ k\alpha},
\end{equation}
where $F$ is a finite subset of $\mathbb{Z}^n/\alpha$ and $p_\gamma\leq q_\gamma\in \mathbb{Z}$.
It is clear that 
\begin{equation}
d_\alpha= (d''_0, \alpha)d_\mu.
\end{equation}
For $t^\alpha d_\mu+ xd_\mu$ where $x\in\mathbb{C}^*$, since  $\Delta$ is a local derivation there is  an element
$$\sum_{[\gamma]\in F}\sum_{k= p'_{\gamma}}^{q'_{\gamma}}t^{\gamma+ k\alpha} d'_{\gamma+ k\alpha}\in W_n,$$
where $d'_{\gamma+ k\alpha}\in \mathfrak{h} $ and $p'_\gamma\leq q'_\gamma\in \mathbb{Z}$, such that
\begin{equation}\aligned &\Delta(t^\alpha d_\mu)= \Delta(t^\alpha d_\mu+ xd_\mu)= [\sum_{[\gamma]\in F}\sum_{k= p'_{\gamma}}^{q'_{\gamma}}t^{\gamma+ k\alpha} d'_{\gamma+ k\alpha}, t^\alpha d_\mu+ xd_\mu]\\
= &\sum_{[\gamma]\in F}\sum_{k= p'_{\gamma}}^{q'_{\gamma}+ 1} t^{\gamma+ k\alpha}((d'_{\gamma+ (k-1)\alpha}, \alpha)d_\mu- (d_\mu, \gamma+ (k-1)\alpha)d'_{\gamma+ (k-1)\alpha}\\
&\ \ \ \ \ \ \ \ \ \ \ \ \ \ \ \ \ \ \ \ \ \ -x(d_\mu, \gamma+ k\alpha)d'_{\gamma+ k\alpha}),\endaligned\end{equation}
where we have assigned $d'_{\gamma+ (p'_{\gamma}- 1)\alpha}= d'_{\gamma+ (q'_{\gamma}+ 1)\alpha}= 0$. Note that we have the same $F$ in (3.1) and (3.3).

\begin{lemma}\label{lem32'} \textit{Let $\Delta$ be a local derivation
on $W_n$ such that $\Delta(d_\mu)= 0$.
Then $F=\{[0]\}$ in (3.1) and (3.3).}
\end{lemma}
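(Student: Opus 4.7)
The plan is to argue by contradiction: assume some $[\gamma]\in F$ satisfies $\gamma\notin\mathbb{Z}\alpha$, and derive a contradiction by exploiting the $x$-dependence in (3.3) against the $x$-independent (3.1) within the class $[\gamma]$. Set $\lambda_k=(d_\mu,\gamma+k\alpha)$. Since $\mu$ is generic and $\gamma+k\alpha\ne 0$ for every $k\in\mathbb{Z}$, we have $\lambda_k\ne 0$ for all $k$; moreover the linear operator $T_k\colon\mathfrak{h}\to\mathfrak{h}$ defined by $T_k(d)=\lambda_k d-(d,\alpha)d_\mu$ is invertible for every $k$, because the only obstruction would be $\lambda_k=\mu\cdot\alpha$, equivalent to $\lambda_{k-1}=0$, which is excluded. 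Write $c_k$ for the coefficient of $t^{\gamma+k\alpha}$ in $\Delta(t^\alpha d_\mu)$; from (3.1), $c_k$ is a fixed element of $\mathfrak{h}$ vanishing outside $[p_\gamma,q_\gamma]$ and nonzero at the two endpoints.

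First I would pin down the extremal indices of (3.3) inside the class $[\gamma]$. Reading (3.3) at $k=p'_\gamma$ yields $c_{p'_\gamma}=-x\lambda_{p'_\gamma}d'_{\gamma+p'_\gamma\alpha}\ne 0$; together with the comparison of supports this forces $p'_\gamma=p_\gamma$. Reading (3.3) at $k=q'_\gamma+1$ yields $c_{q'_\gamma+1}=-T_{q'_\gamma}(d'_{\gamma+q'_\gamma\alpha})$; if $q'_\gamma+1>q_\gamma$ then the left-hand side vanishes and invertibility of $T_{q'_\gamma}$ forces $d'_{\gamma+q'_\gamma\alpha}=0$, contradicting the maximality of $q'_\gamma$. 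Combined with $q'_\gamma+1\ge q_\gamma$ (again by support comparison), this yields $q'_\gamma=q_\gamma-1$, so the $d'_{\gamma+k\alpha}$ live exactly on $k\in\{p_\gamma,\ldots,q_\gamma-1\}$. The edge case $q_\gamma=p_\gamma$ yields empty support of $d'$, hence zero contribution of $[\gamma]$ to (3.3), contradicting $c_{p_\gamma}\ne 0$; so we may assume $q_\gamma\ge p_\gamma+1$.

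Next I would run a backward recursion on the relations in (3.3). At $k=q_\gamma$ the equation collapses to $T_{q_\gamma-1}(d'_{\gamma+(q_\gamma-1)\alpha})=-c_{q_\gamma}$, so $d'_{\gamma+(q_\gamma-1)\alpha}$ is uniquely determined and constant in $x$. The equation at $k=q_\gamma-1$ then expresses $d'_{\gamma+(q_\gamma-2)\alpha}$ via $T_{q_\gamma-2}$ in terms of the previous $d'$ and of $c_{q_\gamma-1}$, a polynomial of degree $\le 1$ in $x$; inductively, $d'_{\gamma+(q_\gamma-1-j)\alpha}$ is a polynomial in $x$ of degree $\le j$. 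Finally the equation at $k=p_\gamma$ reduces to $c_{p_\gamma}=-x\lambda_{p_\gamma}d'_{\gamma+p_\gamma\alpha}$; the right-hand side is $x$ times a polynomial in $x$, hence a polynomial in $x$ with zero constant term, while the left-hand side is a fixed nonzero constant. Comparing constant terms forces $c_{p_\gamma}=0$, a contradiction. Hence no such $[\gamma]$ exists and $F=\{[0]\}$.

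The main obstacle is recognizing that the two-sided boundary conditions in (3.3), together with the free parameter $x$, pit two incompatible $x$-dependencies against one another---a $1/x$ pole from the forward reading at the lower end versus a polynomial in $x$ from the backward recursion at the upper end. The hypothesis $\gamma\notin\mathbb{Z}\alpha$ is used precisely to ensure the nonvanishing of every $\lambda_k$ and the invertibility of every $T_k$, without which the backward recursion would stall.
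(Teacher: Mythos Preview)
Your proof is correct and follows essentially the same strategy as the paper's: both fix a class $[\gamma]\ne[0]$, use support comparison together with the nondegeneracy coming from $\mu$ being generic to force $p'_\gamma=p_\gamma$ and $q'_\gamma=q_\gamma-1$, and then eliminate the $d'_{\gamma+k\alpha}$ through the chain of equations to produce a polynomial identity in the free parameter that contradicts the nonvanishing of an endpoint coefficient. The only difference is directional---the paper substitutes forward from $k=p_\gamma$ and lands on a relation in $x^{-1}$ violating $d_{\gamma+q_\gamma\alpha}\ne 0$, whereas you substitute backward from $k=q_\gamma$ and land on a relation in $x$ violating $d_{\gamma+p_\gamma\alpha}\ne 0$; your packaging of the nondegeneracy via the invertible operators $T_k$ is a tidy variant of the paper's explicit computation that $d'_{\gamma+q'_\gamma\alpha}\in\mathbb{C}d_\mu$ forces $[\gamma]=[0]$.
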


\begin{proof} We have assumed that $\alpha\ne0$.
Suppose that $d_{\gamma+ p_\gamma \alpha}\neq 0$ and $d_{\gamma+ q_\gamma \alpha}\neq 0$, $d'_{\gamma+ p'_\gamma \alpha}\neq 0$ and $d'_{\gamma+ q'_\gamma \alpha}\neq 0$ for some $[\gamma]\neq [0]$. Comparing the right hand sides of  (3.1) and (3.3) we see  that $p_\gamma=p'_\gamma$ and $q_\gamma\le q'_\gamma+1$. If   $q_\gamma<q'_\gamma+1$,  from (3.1) and (3.3) we deduce that
$$(d'_{\gamma+ q'_\gamma \alpha}, \alpha)d_\mu- (d_\mu, \gamma+ q'_\gamma \alpha)d'_{\gamma+ q'_\gamma \alpha}= 0.$$
Since $(d_\mu, \gamma+ q'_\gamma \alpha)\neq 0$, we see  that $d'_{\gamma+ q'_\gamma \alpha}= c d_\mu$ for some $c\in \mathbb{C}^*$, and furthermore
$$(c d_\mu, \alpha)d_\mu- (d_\mu, \gamma+ q'_\gamma \alpha)c d_\mu= c(d_\mu, -\gamma- (q'_\gamma-1) \alpha)d_\mu= 0.$$
Then $ \gamma+(q'_\gamma-1) \alpha=0$, i.e., $[\gamma]= [0]$, a contradiction. Thus $q_\gamma=q'_\gamma+1$, and $p_\gamma<q_\gamma$.

Comparing (3.1) and (3.3) we deduce that (at least two equations)
\begin{equation}
\aligned &-x(d_\mu, \gamma+ p_\gamma \alpha)d'_{\gamma+ p_\gamma \alpha}= d_{\gamma+ p_\gamma \alpha};\\
&(d'_{\gamma+ p_\gamma\alpha}, \alpha)d_\mu- (d_\mu, \gamma+ p_\gamma\alpha)d'_{\gamma+ p_\gamma\alpha}- x(d_\mu, \gamma+ (p_\gamma+ 1)\alpha)d'_{\gamma+ (p_\gamma+ 1)\alpha}= d_{\gamma+ (p_\gamma+1)\alpha};\\
&  \cdots\cdots  \cdots\cdots;\\
&(d'_{\gamma+ (q_\gamma-2)\alpha}, \alpha)d_\mu- (d_\mu, \gamma+ (q_\gamma-2)\alpha)d'_{\gamma+ (q_\gamma-2)\alpha}- x(d_\mu, \gamma+ (q_\gamma-1)\alpha)d'_{\gamma+ (q_\gamma-1)\alpha}\\
&\hskip 3cm = d_{\gamma+ (q_\gamma-1)\alpha};\\
&(d'_{\gamma+ (q_\gamma-1)\alpha}, \alpha)d_\mu- (d_\mu, \gamma+ (q_\gamma-1)\alpha)d'_{\gamma+ (q_\gamma-1)\alpha}=
d_{\gamma+ q_\gamma\alpha}.\endaligned
\end{equation}
Since $(d_\mu, \gamma+ k\alpha)\neq 0$ for $k\in\mathbb{Z}$,  eliminating $d'_{\gamma+ p_\gamma\alpha}, \ldots, d'_{\gamma+ (q_\gamma- 1)\alpha}$ in this order by substitution we see that
\begin{equation}d_{\gamma+ q_\gamma\alpha}+ *x^{-1}+ \cdots+ *x^{-q_\gamma+p_\gamma}= 0,\end{equation}
where $*\in \mathfrak{h}$ are independent of $x$. We always find some $x\in \mathbb{C}^*$ not satisfying (3.5), which is a contradiction.
The lemma follows.
\end{proof}

Now (3.1) and (3.3) become
\begin{equation}
\aligned &\sum_{k= p}^{q}t^{k\alpha} d_{ k\alpha}= \sum_{k= p'}^{q'+ 1} t^{ k\alpha}((d'_{ (k-1)\alpha}, \alpha)d_\mu- (d_\mu, (k-1)\alpha)d'_{ (k-1)\alpha} -x(d_\mu,k\alpha)d'_{ k\alpha}),\endaligned\end{equation}
where $p=p_0, q=q_0, p'=p'_0, q'=q'_0$ and we have assigned $d'_{(p'- 1)\alpha}= d'_{(q'+ 1)\alpha}= 0$.
We may assume  that $d_{p \alpha}\neq 0$ and $d_{ q\alpha}\neq 0$, $d'_{ p' \alpha}\neq 0$ and $d'_{q' \alpha}\neq 0$. Clearly $p'\le p\le q\le q'+1$, and $p'=p$ if $p'\ne 0$. Our destination is to  prove that $p=q=1$.

\begin{lemma}\label{lem32''} \textit{Let $\Delta$ be a local derivation
on $W_n$ such that $\Delta(d_\mu)= 0$.
Then $p'\ge0$ and $p\ge 1$ in (3.6).}
\end{lemma}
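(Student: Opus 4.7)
The plan is to derive contradictions from the only two possibilities that can violate the conclusion, namely $p=p'=0$ and $p=p'<0$; these are forced by the inequalities $p'\le p\le q\le q'+1$ and $p'=p$ whenever $p'\ne0$ already established just after (3.6).

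For $p=p'=0$, comparing the coefficient of $t^{0}$ in (3.6) immediately gives a contradiction: at $k=0$ the right-hand side is $(d'_{-\alpha},\alpha)d_\mu-(d_\mu,-\alpha)d'_{-\alpha}-x(d_\mu,0)d'_0$, which vanishes because $d'_{-\alpha}=d'_{(p'-1)\alpha}=0$ by the boundary convention and $(d_\mu,0)=0$, whereas the left-hand side equals $d_{p\alpha}=d_0\ne 0$.

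For the main case $p=p'<0$, I propagate (3.6) upward from $k=p$: the equation $d_{p\alpha}=-x(d_\mu,p\alpha)d'_{p\alpha}$ (using $d'_{(p-1)\alpha}=0$) determines $d'_{p\alpha}=-d_{p\alpha}/(x(d_\mu,p\alpha))$, and since $(d_\mu,k\alpha)\ne0$ for $p\le k\le -1$, the equations at $k=p+1,\ldots,-1$ iteratively determine each $d'_{k\alpha}$ as a polynomial in $x^{-1}$ with $\mathfrak h$-coefficients and no constant term. If $q'\ge -1$, the $k=0$ equation has right-hand side $(d'_{-\alpha},\alpha)d_\mu-(d_\mu,-\alpha)d'_{-\alpha}$ (the $d'_0$-term vanishing because $(d_\mu,0)=0$), a polynomial in $x^{-1}$ with no constant term; matching against the $x$-independent left-hand side forces the $x^{0}$-part $d_0=0$, while writing $d'_{-\alpha}=\sum_{j\ge1}A_j x^{-j}$ each $x^{-j}$-coefficient gives $(A_j,\alpha)d_\mu=(d_\mu,-\alpha)A_j$, forcing $A_j\in\mathbb C d_\mu$ and then, via the scalar identity $2(d_\mu,\alpha)A_j=0$, forcing $A_j=0$. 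Hence $d'_{-\alpha}\equiv0$. Substituting back into the $k=-1$ equation and repeating the same argument (the nondegeneracy is $(2-k)(d_\mu,\alpha)\ne0$ for $k\le-1$) cascades downward through $k=-1,-2,\ldots,p+1$, producing $d'_{(k-1)\alpha}=0$ and $d_{k\alpha}=0$ at each step; the $k=p$ equation then yields $d_{p\alpha}=0$, contradicting $d_{p\alpha}\ne 0$. The remaining subcase $q'\le-2$ is handled by the $k=q'+1$ equation whose right-hand side $(d'_{q'\alpha},\alpha)d_\mu-(d_\mu,q'\alpha)d'_{q'\alpha}$ is again a polynomial in $x^{-1}$ without constant term: if $q=q'+1$, matching with the constant $d_{q\alpha}\ne 0$ immediately fails at the $x^{0}$-part; if $q<q'+1$, the polynomial must vanish, and the analogous scalar check (valid since $q'\ne 1$) forces $d'_{q'\alpha}=0$, contradicting our hypothesis.

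The main obstacle I anticipate is executing the cascading induction in the $q'\ge-1$ subcase; its key nondegeneracy is that $(2-k)(d_\mu,\alpha)$ remains nonzero throughout $k\le-1$ (it only fails at $k=2$, outside our range), ensuring that the self-consistency check at each level genuinely pins $A_j=0$ rather than leaving a one-parameter family proportional to $d_\mu$.
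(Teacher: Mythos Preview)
Your argument is correct and follows the paper's route: both proofs recursively solve for $d'_{p\alpha},\ldots,d'_{-\alpha}$ from the equations at levels $k=p,\ldots,-1$, exploit the degenerate $k=0$ equation (where the coefficient of $d'_0$ vanishes because $(d_\mu,0)=0$) as a constraint, and then cascade downward to force $d_{p\alpha}=0$. Your polynomial-in-$x^{-1}$ bookkeeping is equivalent to the paper's ``eliminate, then pick a bad $x$'' phrasing and makes the cascade slightly more uniform; one small remark is that your separate $q'\le -2$ subcase is actually unnecessary (the $k=0$ equation, being the $t^0$-coefficient of (3.6), holds for every $x$ regardless of $q'$, so your main argument already covers it), which is just as well since $q'$ depends on $x$ and so speaking of the ``$x^0$-part'' of the level-$(q'+1)$ equation is not quite well-posed.
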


\begin{proof}   To the contrary we assume that $p'< 0$. Then $p'= p$.
 If further $q'\geq -1$, from (3.6) we obtain a set of (at least two) equations
\begin{equation}
\aligned &-x(d_\mu, p \alpha)d'_{p \alpha}= d_{p \alpha};\\
&(d'_{p\alpha}, \alpha)d_\mu- (d_\mu, p\alpha)d'_{p\alpha}- x(d_\mu, (p+ 1)\alpha)d'_{(p+ 1)\alpha}= d_{(p+1)\alpha};\\
&\cdots\cdots;\\
&(d'_{-\alpha}, \alpha)d_\mu- (d_\mu, -\alpha)d'_{-\alpha}= d_0.\endaligned
\end{equation}
If $d_0\ne 0$, using the same arguments as for (3.4), the equations (3.7) makes contradictions.
So we consider the case that $d_0=0$. From the last equation in (3.7)  we see that $  d'_{-\alpha}=0$. We continue upwards in (3.7)  in this manner to some step. We get
$$d_0= d_{-\alpha}= \cdots= d_{l\alpha}= 0,\,\,\,
 d_{(l-1)\alpha}\neq 0, \ \text{and}\ d'_{-\alpha}= \cdots= d'_{(l-1)\alpha}= 0.$$
If $p+1< l (\leq 0),$
then  (3.7) becomes
\begin{equation}
\aligned &-x(d_\mu, p \alpha)d'_{p \alpha}= d_{p \alpha};\\
&(d'_{p\alpha}, \alpha)d_\mu- (d_\mu, p\alpha)d'_{p\alpha}- x(d_\mu, (p+ 1)\alpha)d'_{(p+ 1)\alpha}= d_{(p+1)\alpha};\\
&\cdots\cdots;\\
&(d'_{(l-2)\alpha}, \alpha)d_\mu- (d_\mu, (l-2)\alpha)d'_{(l-2)\alpha}= d_{(l-1)\alpha}.\endaligned
\end{equation}
Using the same arguments as for (3.4), the equations (3.8) makes contradictions. We need only to consider the case that $p+1=l$, i.e., $l-1=p$.
In this case we have that  $0=d'_{(l-1)\alpha}=d'_{p'\alpha}\ne0$, again a  contradiction.
Therefore $q'< -1$.

 If $q<q'+1$, from (3.6) we see that
$$(d'_{q' \alpha}, \alpha)d_\mu- (d_\mu, q' \alpha)d'_{q' \alpha}= 0.$$
Since $(d_\mu,  q' \alpha)\neq 0$, we see  that $d'_{  q'  \alpha}= c d_\mu$ for some $c\in \mathbb{C}^*$, and furthermore
$$(c d_\mu, \alpha)d_\mu- (d_\mu,   q'  \alpha)c d_\mu= c(d_\mu, - (q' -1) \alpha)d_\mu= 0.$$
Then $  (q'-1) \alpha=0$, i.e., $q'=1$, a contradiction. So
  $q=q'+1$ and $p<q$. We  obtain a set of  (at least two) equations from (3.6)
\begin{equation}
\aligned &-x(d_\mu, p \alpha)d'_{p \alpha}= d_{p \alpha};\\
&(d'_{p\alpha}, \alpha)d_\mu- (d_\mu, p\alpha)d'_{p\alpha}- x(d_\mu, (p+ 1)\alpha)d'_{(p+ 1)\alpha}= d_{(p+1)\alpha};\\
&\cdots\cdots;\\
&(d'_{(q-1)\alpha}, \alpha)d_\mu- (d_\mu, (q-1)\alpha)d'_{(q-1)\alpha}= d_{q\alpha}.\endaligned
\end{equation}
Using the same arguments as for (3.4), the equations (3.9) makes contradictions. Hence
$p'\geq 0.$

If $p'\ge1$, then $p= p'\ge 1$.
If $p'= 0$, then $d_0= (d'_{-\alpha}, \alpha)d_\mu- (d_\mu, -\alpha)d'_{-\alpha}= 0$ by (3.6). So
$p\geq 1$ also. \end{proof}

\begin{lemma}\label{lem32} \textit{Let $\Delta$ be a local derivation
on $W_n$ such that $\Delta(d_\mu)= 0$.
Then $$\Delta(t^\alpha d_\mu)\in\mathbb{C}t^\alpha d_\mu,\,\,\,\forall \alpha\in \mathbb{Z}^n.$$}
\end{lemma}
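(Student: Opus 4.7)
The target $\Delta(t^\alpha d_\mu)\in\mathbb{C}\, t^\alpha d_\mu$ is, in view of (3.6) and Lemma~\ref{lem32''}, equivalent to showing both $p=q=1$ and $d_\alpha\in\mathbb{C}d_\mu$. So the plan is a two-step program: first rule out $q\geq 2$ (which, together with $p\geq 1$ and $p\leq q$, yields $p=q=1$), and then analyze the residual equations to force $d_\alpha$ into $\mathbb{C}d_\mu$.

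For the first step, suppose for contradiction that $q\geq 2$. I would begin by showing $q=q'+1$: if $q<q'+1$, the equation in (3.6) at $k=q'+1$ has zero left-hand side, so
\[(d'_{q'\alpha},\alpha)d_\mu-(d_\mu,q'\alpha)d'_{q'\alpha}=0.\]
Since $(d_\mu,q'\alpha)\neq 0$, this forces $d'_{q'\alpha}=c\,d_\mu$ for some $c\in\mathbb{C}$, and then the same inner-product trick used in Lemma~\ref{lem32'} yields $c\,(d_\mu,(1-q')\alpha)=0$, giving either $d'_{q'\alpha}=0$ (contradicting the assumption that $d'_{q'\alpha}\neq 0$) or $q'=1$; but $q'=1$ combined with $q<q'+1=2$ would force $q<2$, contradicting $q\geq 2$. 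Hence $q=q'+1$, so $d'_{q\alpha}=0$. Now I would run the bottom-up cascade exactly as in the proof of Lemma~\ref{lem32'}: starting from the convention $d'_{(p'-1)\alpha}=0$ and solving (3.6) successively at $k=p',p'+1,\ldots,q-1$, each $d'_{k\alpha}$ is expressed as a polynomial in $x^{-1}$ with coefficients independent of $x$ and (inductively) with no constant term in $x$. Substituting the resulting $d'_{(q-1)\alpha}$ into the top equation at $k=q$ (where $d'_{q\alpha}=0$) produces an identity of the form
\[d_{q\alpha}+ \ast\, x^{-1}+\cdots+\ast\, x^{-(q-p)}=0,\]
with the $\ast\in\mathfrak{h}$ independent of $x$, valid for every admissible $x\in\mathbb{C}^*$. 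Matching constant terms forces $d_{q\alpha}=0$, the desired contradiction.

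For the second step, with $p=q=1$ in hand, equation (3.6) reads $t^\alpha d_\alpha=\sum_{k=p'}^{q'+1}t^{k\alpha}(\,\cdot\,)$, and the coefficient of $t^{k\alpha}$ on the right must vanish for every $k\neq 1$. By Lemma~\ref{lem32''} we have $p'\in\{0,1\}$; applying the top-end argument opening Step~1 at $k=q'+1$ (with LHS equal to $0$ whenever $q'+1\geq 2$) forces $q'\in\{0,1\}$. If $q'=0$, then $d'_\alpha=d'_{(q'+1)\alpha}=0$ and the $k=1$ equation reads $d_\alpha=(d'_0,\alpha)d_\mu\in\mathbb{C}d_\mu$. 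If $q'=1$, the vanishing at $k=2$ gives $(d'_\alpha,\alpha)d_\mu-(d_\mu,\alpha)d'_\alpha=0$, so $d'_\alpha\in\mathbb{C}d_\mu$; substituting into the $k=1$ equation $d_\alpha=(d'_0,\alpha)d_\mu-x(d_\mu,\alpha)d'_\alpha$ again gives $d_\alpha\in\mathbb{C}d_\mu$, completing the proof.

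The main obstacle I foresee is the bookkeeping of the bottom-up cascade in Step~1 when $p=1$, and in particular when $p'=0$: then the equation at $k=0$ is automatic (because $(d_\mu,0)=0$), so $d'_0$ is not determined by the cascade but enters as a free parameter in the expressions for all subsequent $d'_{k\alpha}$. Moreover, the linear operator $M_k(e):=(e,\alpha)d_\mu-(d_\mu,k\alpha)e$ driving the cascade has nontrivial kernel $\mathbb{C}d_\mu$ precisely at $k=1$, which can collapse the $x^{-1}$-degree of the cascaded $d'_{k\alpha}$. The care needed is to verify that, even with these degenerations, the constant term $d_{q\alpha}$ still ends up isolated in the final polynomial identity, so that matching the $x^0$-coefficient forces $d_{q\alpha}=0$ as required.
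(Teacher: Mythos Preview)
Your overall strategy is the same as the paper's, and Step~1 largely mirrors the paper's argument. But the obstacle you flag at the end is exactly the place where your proof is incomplete, and the paper resolves it by a device you have overlooked: equation~(3.2). Recall that~(3.1) expresses $\Delta(t^\alpha d_\mu)$ via an element $a=\sum t^\beta d''_\beta$ \emph{without} the extra summand $x\,d_\mu$; the $t^\alpha$-component of $[a,t^\alpha d_\mu]$ is $(d''_0,\alpha)d_\mu$, so $d_\alpha=(d''_0,\alpha)d_\mu\in\mathbb{C}d_\mu$ holds \emph{a priori}, independently of the system~(3.6). The paper feeds this into the $k=1$ equation of~(3.6) when $p'=0$ to get
\[
(d'_0,\alpha)d_\mu - x(d_\mu,\alpha)d'_\alpha=d_\alpha\in\mathbb{C}d_\mu,
\]
which forces $d'_\alpha\in\mathbb{C}d_\mu$; then $L_1(d'_\alpha)=0$ and the $k=2$ equation collapses to $-x(d_\mu,2\alpha)d'_{2\alpha}=d_{2\alpha}$, so the cascade restarts cleanly at $k=2$ with no $d'_0$-dependence and the standard elimination yields the contradiction. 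In other words, (3.2) kills the free parameter $d'_0$ in one stroke rather than having to track it through the recursion.

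Your route \emph{can} be completed without~(3.2): the $d'_0$-contribution to $d'_\alpha$ lies in $\mathbb{C}d_\mu$, which is exactly $\ker L_1$, so $L_1(d'_\alpha)$ is already $d'_0$-free and the cascade from $k\ge 2$ proceeds with $x$-independent coefficients. But you have not written this verification, and that is the gap. Note also that your Step~2 is unnecessary: once $p=q=1$, (3.2) immediately gives $\Delta(t^\alpha d_\mu)=t^\alpha d_\alpha=(d''_0,\alpha)t^\alpha d_\mu\in\mathbb{C}t^\alpha d_\mu$, so the separate analysis of $q'\in\{0,1\}$ is redundant.
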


\begin{proof}  From Lemma \ref{lem32''}, we know that $q\ge p\ge 1$. We need only to prove that $q=1$ in (3.6). Otherwise we assume that $q> 1$, and then $q'>0$.

 {\bf Case 1:} $q'>1$.

 In this case we can show that $q=q'+ 1$ as in  the above arguments.
If $p'\geq 1$, we see that $p=p'$ and $p<q$. From (3.6) we obtain a set of  (at least two) equations
\begin{equation}
\aligned &-x(d_\mu, p \alpha)d'_{p \alpha}= d_{p \alpha};\\
&(d'_{p\alpha}, \alpha)d_\mu- (d_\mu, p\alpha)d'_{p\alpha}- x(d_\mu, (p+ 1)\alpha)d'_{(p+ 1)\alpha}= d_{(p+1)\alpha};\\
&\cdots\cdots;\\
&(d'_{(q-1)\alpha}, \alpha)d_\mu- (d_\mu, (q-1)\alpha)d'_{(q-1)\alpha}= d_{q\alpha}.\endaligned
\end{equation}
Using the same arguments as for (3.4), the equation (3.10) makes contradictions.
 So $p'=0.$ Now we have $$p'=0, p\ge 1, q=q'+1>2.$$ By (3.6) and (3.2) we have
\begin{equation}
(d'_0, \alpha)d_\mu- x(d_\mu, \alpha)d'_\alpha= d_\alpha= (d''_0, \alpha)d_\mu;
\end{equation}
\begin{equation}
(d'_\alpha, \alpha)d_\mu- (d_\mu, \alpha)d'_\alpha- x(d_\mu, 2\alpha)d'_{2\alpha}= d_{2\alpha}.
\end{equation}
Equation (3.11) implies that $d'_\alpha= c d_\mu$, $c\in \mathbb{C}$. Then Equation (3.12) can be simplified to $-x(d_\mu,2\alpha)d'_{2\alpha}= d_{2\alpha}$. Again from (3.6) we obtain a set of   (at least two)   equations
\begin{equation}
\aligned &-x(d_\mu,2\alpha)d'_{2\alpha}= d_{2\alpha}\\
&(d'_{2\alpha}, \alpha)d_\mu- (d_\mu, 2\alpha)d'_{2\alpha}- x(d_\mu, 3\alpha)d'_{3\alpha}= d_{3\alpha}\\
&\cdots\cdots\\
&(d'_{(q-1)\alpha}, \alpha)d_\mu- (d_\mu, (q-1)\alpha)d'_{(q-1)\alpha}= d_{q\alpha}.\endaligned
\end{equation}
Using the same arguments again,  (3.13) makes contradictions. So $q'=1$. Now we have

{\bf Case 2:} $q'=1$.

We need only consider the case that $q=2$. In this case we still have Equations (3.11) and (3.12) with $d'_{2\alpha}=0$. Equation (3.11) implies that $d'_\alpha= c d_\mu$, $c\in \mathbb{C}$. Then Equation (3.12)  implies that $d_{2\alpha}=0$ which is a contradiction.

 Therefore  $p= q= 1$ and $\Delta(t^\alpha d_\mu)= t^\alpha d_\alpha= (d''_0, \alpha)t^\alpha d_\mu$ by (3.2).
The lemma follows.
\end{proof}

\begin{lemma}\label{lem33}
 \textit{Let $\Delta$ be a local derivation on $W_n$ such that $\Delta(d_\mu)= \Delta(t_i d_\mu)= 0$ for a given $1\leq i\leq n$.
 Then $\Delta(t_i^m d_\mu)= 0$ for any $m\in \mathbb{Z}$.}
\end{lemma}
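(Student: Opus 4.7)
The plan is to combine Lemma \ref{lem32} with a parametric test-element argument analogous to (but more delicate than) the one used in Lemma \ref{lem32'}. By Lemma \ref{lem32} we have $\Delta(t_i^m d_\mu) = c_m t_i^m d_\mu$ for some scalar $c_m \in \mathbb{C}$; the hypothesis gives $c_0 = c_1 = 0$, so it suffices to prove $c_m = 0$ for every $m \in \mathbb{Z} \setminus \{0,1\}$.

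Fix such an $m$ and consider the parametric test element $y_\lambda := t_i^m d_\mu + \lambda t_i d_\mu$ for $\lambda \in \mathbb{C}^*$. By linearity $\Delta(y_\lambda) = c_m t_i^m d_\mu$, and by the local-derivation property there exists $a_\lambda \in W_n$ with $[a_\lambda, y_\lambda] = c_m t_i^m d_\mu$. The right-hand side is supported on $\mathbb{Z}\epsilon_i$, while any component of $a_\lambda$ indexed outside $\mathbb{Z}\epsilon_i$ would contribute bracketed terms supported outside $\mathbb{Z}\epsilon_i$ (the same support reduction as in Lemma \ref{lem32'}), so we may assume $a_\lambda = \sum_k t_i^k d'_k$ with $d'_k \in \mathfrak{h}$. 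Matching the coefficient of $t_i^l$ in the bracket equation yields a vector equation in $\mathfrak{h}$; projecting onto any functional $v$ vanishing on $d_\mu$ eliminates the $d_\mu$-contributions and produces the scalar two-step recursion
\[(l-m)\,v(d'_{l-m}) + \lambda(l-1)\,v(d'_{l-1}) = 0,\]
which links $v(d'_k)$ and $v(d'_{k+m-1})$ within each coset of $(m-1)\mathbb{Z}$. Finiteness of the support of $a_\lambda$, together with non-vanishing of the recursion coefficients away from a small exceptional set of indices, then forces $v(d'_k) = 0$ for every $k \neq 0$; thus $d'_k \in \mathbb{C} d_\mu$ for $k \neq 0$, and we may write $d'_k = e_k d_\mu$.

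Substituting this back into the full vector equation produces the scalar recursion $e_{l-m}(2m-l) + \lambda e_{l-1}(2-l) = 0$; examining its top and bottom boundaries within each coset forces $e_k = 0$ except possibly at $k = 1$ and $k = m$, with the surviving values linked by $e_1 = \lambda e_m$. The vector equation at $l = 1$ then forces $(d'_0)_i = 0$, and the equation at $l = m$, using $(d'_0)_i = 0$ together with $e_{m-1} = 0$, yields $c_m = 0$. The negative values of $m$ and the marginal cases $m \in \{2,-1\}$ are handled by the same strategy after cosmetic modifications of indices. The main obstacle is the combinatorial bookkeeping of the two-step recursion near the exceptional indices $k \in \{0, 1, 1-m, m-1, m\}$, where the generic recursion coefficient vanishes and case-by-case handling becomes necessary in order to pin down exactly which $e_k$ can survive.
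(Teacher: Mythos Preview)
Your plan is correct and follows the same core strategy as the paper: apply the local-derivation hypothesis to the test element $t_i^m d_\mu + \lambda\, t_i d_\mu$, reduce the support of the bracketing element, and analyse the resulting linear relations to force $c_m=0$. The paper simply takes $\lambda=1$; indeed, in your sketch the parameter $\lambda$ is never actually varied (unlike the parameter $x$ in Lemma~\ref{lem32'}, which is essential there), so it may as well be set to $1$.

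The execution differs in two minor respects. First, the paper reduces more sharply: instead of cutting $a_\lambda$ down to support in $\mathbb{Z}\epsilon_i$, it observes that the bracket respects the decomposition over $\mathbb{Z}^n/(m-1)\epsilon_i$ and restricts directly to the class $[0]$, i.e.\ to support $\{k(m-1)\epsilon_i:k\in\mathbb{Z}\}$. Second, rather than projecting onto a functional $v$ with $v(d_\mu)=0$ and then running a two-step scalar recursion across all cosets, the paper argues directly at the extremal indices $p_0,q_0$ of this restricted support: examining the lowest and highest surviving terms shows $p_0\in\{0,1\}$ and $q_0=0$, hence $p_0=q_0=0$, after which the equations at $l=1$ and $l=m$ give $(d_0,\epsilon_i)=0$ and $c_m=(d_0,m\epsilon_i)=0$ immediately. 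Your route reaches the same endpoint but with more bookkeeping; the paper's tighter support reduction is what lets it avoid the coset-by-coset recursion and the separate treatment of the exceptional indices you flag.
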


\begin{proof}
We may assume that $m\neq 0, 1$. By Lemmas \ref{lem32}, there is $c\in \mathbb{C}$ and
$$\sum_{[\gamma]\in F}\sum_{k= p_{\gamma}}^{q_{\gamma}}t^{\gamma+ k(m-1)\epsilon_i} d_{\gamma+ k(m-1)\epsilon_i}\in W_n,$$
where $F$ is a finite subset of $\mathbb{Z}^n/(m-1)\epsilon_i$ and $p_\gamma\leq q_\gamma\in \mathbb{Z}$, $d_{\gamma+ k(m-1)\epsilon_i}\in \mathfrak{h}$, such that
\begin{equation}
\aligned ct_i^m d_\mu=& \Delta(t_i^m d_\mu)= \Delta(t_i^m d_\mu+ t_i d_\mu)=
[\sum_{[\gamma]\in F}\sum_{k= p_{\gamma}}^{q_{\gamma}}t^{\gamma+ k(m-1)\epsilon_i} d_{\gamma+ k(m-1)\epsilon_i}, t_i^m d_\mu+ t_i d_\mu]\\
=& [\sum_{k= p_0}^{q_0} t_i^{k(m-1)} d_{k(m-1)\epsilon_i}+ \sum_{[0]\neq [\gamma]\in F}\sum_{k= p_{\gamma}}^{q_{\gamma}}t^{\gamma+ k(m-1)\epsilon_i} d_{\gamma+ k(m-1)\epsilon_i}, t_i^m d_\mu+ t_i d_\mu]\\
=& [\sum_{k= p_0}^{q_0} t_i^{k(m-1)} d_{k(m-1)\epsilon_i}, t_i^m d_\mu+ t_i d_\mu]\\
=& \sum_{k= p_0}^{q_0+1}t_i^{k(m-1)+1}((d_{(k-1)(m-1)\epsilon_i}, m\epsilon_i)d_\mu- (d_\mu, (k-1)(m-1)\epsilon_i)d_{(k-1)(m-1)\epsilon_i})\\
&\ \ \ \ \ \ \ \ \ \ \ \ \ \ \ \ + ((d_{k(m-1)\epsilon_i}, \epsilon_i)d_\mu- (d_\mu, k(m-1)\epsilon_i)d_{k(m-1)\epsilon_i}).\endaligned
\end{equation}
where we have assigned $d_{(p_0-1)(m-1)\epsilon_i}= d_{(q_0+1)(m-1)\epsilon_i}= 0$, and we have used the fact  that
$$[\sum_{[0]\neq [\gamma]\in F}\sum_{k= p_{\gamma}}^{q_{\gamma}}t^{\gamma+ k(m-1)\epsilon_i} d_{\gamma+ k(m-1)\epsilon_i}, t_i^m d_\mu+ t_i d_\mu]= 0$$
since it cannot contain elements from $t_i^m \mathfrak{h}$ or eliminate any term in
$$[\sum_{k= p_0}^{q_0} t_i^{k(m-1)} d_{k(m-1)\epsilon_i}, t_i^m d_\mu+ t_i d_\mu].$$
We may assume that $d_{p_0(m-1)\epsilon_i}\neq 0$
and $d_{q_0(m-1)\epsilon_i}\neq 0$.
Our destination is to prove that $c=0$, To the contrary we assume that $c\ne0$. Then
$$p_0\le 1,   q_0\ge0,\text{ and }p_0\le q_0.$$

{\bf Claim 1.}  $p_0= 0$ or $1$.

Suppose that $p_0<0$, by (3.14) we deduce that
$$(d_{p_0(m-1)\epsilon_i}, \epsilon_i)d_\mu- (d_\mu, p_0(m-1)\epsilon_i)d_{p_0(m-1)\epsilon_i}= 0.$$
Since $(d_\mu, p_0(m-1)\epsilon_i)\neq 0$, we have $d_{p_0(m-1)\epsilon_i}
= c'd_\mu$ for some $c'\in \mathbb{C}^*$ and furthermore
$$\aligned &(d_{p_0(m-1)\epsilon_i}, \epsilon_i)d_\mu- (d_\mu, p_0(m-1)\epsilon_i)d_{p_0(m-1)\epsilon_i}\\
= &(c'd_\mu, \epsilon_i)d_\mu- (d_\mu, p_0(m-1)\epsilon_i)c'd_\mu\\
= &c'(d_\mu, (1- p_0(m-1))\epsilon_i)d_\mu= 0\endaligned.$$
It follows that $1- p_0(m-1)= 0$, and thus $p_0=-1, m=0$, a contradiction. Hence $p_0= 0$ or $1$.

{\bf Claim 2.}  $q_0= 0$.

Suppose that $q_0> 0$, by (3.14) we deduce that
$$(d_{q_0(m-1)\epsilon_i}, m\epsilon_i)d_\mu- (d_\mu, q_0(m-1)\epsilon_i)d_{q_0(m-1)\epsilon_i}= 0.$$
Since $(d_\mu, q_0(m-1)\epsilon_i)\neq 0$, we have $d_{q_0(m-1)\epsilon_i}
= c'd_\mu$ for some $c'\in \mathbb{C}^*$ and furthermore
$$\aligned &(d_{q(m-1)\epsilon_i}, m\epsilon_i)d_\mu- (d_\mu, q(m-1)\epsilon_i)d_{q(m-1)\epsilon_i}\\
= &(c'd_\mu, m\epsilon_i)d_\mu- (d_\mu, q_0(m-1)\epsilon_i)c'd_\mu\\
= &c'(d_\mu, (m- q_0(m-1))\epsilon_i)d_\mu= 0\endaligned.$$
It follows that $m- q_0(m-1)= 0$, and thus $m= 2$ and $q_0= 2$. Now from (3.14) we deduce that
\begin{align}
&(d_0, \epsilon_i)d_\mu= 0;\\
&(d_0, 2\epsilon_i)d_\mu+ ((d_{\epsilon_i}, \epsilon_i)d_\mu- (d_\mu, \epsilon_i)d_{\epsilon_i})= cd_\mu;\\
&((d_{\epsilon_i}, 2\epsilon_i)d_\mu- (d_\mu, \epsilon_i)d_{\epsilon_i})+ ((d_{2\epsilon_i}, \epsilon_i)d_\mu- (d_\mu, 2\epsilon_i)d_{2\epsilon_i})= 0;\\
&(d_{2\epsilon_i}, 2\epsilon_i)d_\mu- (d_\mu, 2\epsilon_i)d_{2\epsilon_i}= 0.
\end{align}
We see that $d_{\epsilon_i}= c''d_\mu$, $c''\in \mathbb{C}$ by (3.18) and (3.17). Substituting into (3.16)
it follows that $c= 0$, a contradiction. Claim 2 follows.

From Claims 1 and 2 we have   $p_0= q_0= 0$. By (3.14) we have $(d_0, \epsilon_i)d_\mu= 0$ and still $cd_\mu= (d_0, m\epsilon_i)d_\mu= 0$. The proof is completed.
\end{proof}

\begin{lemma}\label{lem34}
 \textit{Let $\Delta$ be a local derivation on $W_n$ such that $\Delta(d_\mu)= \Delta(t_i d_\mu)= 0$ for all $1\leq i\leq n$. Then $\Delta(t^\alpha d_\mu)= 0$ for all $\alpha\in\mathbb{Z}^n$.}
\end{lemma}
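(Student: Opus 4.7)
By Lemma~\ref{lem32} we may write $\Delta(t^\alpha d_\mu) = c_\alpha t^\alpha d_\mu$ for some $c_\alpha\in\mathbb{C}$; the goal is $c_\alpha=0$ for all $\alpha\in\mathbb{Z}^n$. Applying Lemma~\ref{lem33} with each $i$ in turn upgrades the hypothesis to $\Delta(t_i^m d_\mu)=0$ for every $m\in\mathbb{Z}$ and every $i$, so $c_{m\epsilon_i}=0$. For the remaining cases I induct on $|I(\alpha)| := |\{i:\alpha_i\ne 0\}|$, the cases $|I|\le 1$ being already handled.

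Fix $\alpha$ with $|I|\ge 2$ and assume $c_{\alpha'}=0$ whenever $|I(\alpha')|<|I|$. The test element I plan to use is
\[
  y = t^\alpha d_\mu + \sum_{i\in I} t^{\alpha-\alpha_i\epsilon_i} d_\mu.
\]
Each $\alpha-\alpha_i\epsilon_i$ has $|I|-1$ nonzero components, so the inductive hypothesis gives $\Delta(y)=c_\alpha t^\alpha d_\mu$, and the local derivation property supplies some $a\in W_n$ with $[a,y]=c_\alpha t^\alpha d_\mu$. I then decompose $a$ by cosets of the lattice $L=\bigoplus_{i\in I}\mathbb{Z}\alpha_i\epsilon_i$; since every exponent appearing in $y$ lies in $L$, only the component $a_{[0]}$ of $a$ supported inside $L$ contributes, and the identity reduces to $[a_{[0]},y]=c_\alpha t^\alpha d_\mu$. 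Write $a_{[0]}=\sum_{\vec k\in\mathbb{Z}^I} t^{\sum_i k_i\alpha_i\epsilon_i} D_{\vec k}$ with $D_{\vec k}\in\mathfrak{h}$ of finite support. A multi-dimensional boundary argument in the spirit of Lemmas~\ref{lem32'}--\ref{lem33} should collapse this support to $\{\vec 0\}$: at any lex-maximal $\vec Q\ne\vec 0$ the equation at degree $\sum_i(Q_i+1)\alpha_i\epsilon_i$ reads $(D_{\vec Q},\alpha)d_\mu = (d_\mu,\sum_i Q_i\alpha_i\epsilon_i)D_{\vec Q}$, and the genericity of $\mu$ pins this to $\vec Q=\vec 1$ or $D_{\vec Q}=0$; the secondary equation at $\vec k'=\vec 1-\vec e_j$ then collapses to $-c_1\alpha_j\mu_j d_\mu=0$ (using $(d_\mu,\beta_j-\alpha)=-\alpha_j\mu_j\ne 0$) and kills $D_{\vec 1}$ as well.

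Once $a_{[0]}=D_0\in\mathfrak{h}$ is constant, reading off the coefficients of $t^\alpha d_\mu$ and $t^{\alpha-\alpha_i\epsilon_i}d_\mu$ in $[D_0,y]$ yields $(D_0,\alpha)=c_\alpha$ and $(D_0,\alpha-\alpha_i\epsilon_i)=0$ for each $i\in I$. Subtracting gives $\alpha_i(D_0)_i=c_\alpha$, so $(D_0)_i=c_\alpha/\alpha_i$ for $i\in I$, and substituting back,
\[
  c_\alpha = (D_0,\alpha) = \sum_{i\in I}\alpha_i(D_0)_i = |I|\,c_\alpha,
\]
so $(|I|-1)c_\alpha=0$ and hence $c_\alpha=0$ since $|I|\ge 2$, completing the induction. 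The main obstacle is the multi-dimensional boundary analysis reducing $a_{[0]}$ to a constant: intermediate lattice points strictly between $\vec 0$ and $\vec 1$ can appear in the support and complicate the iterative shrinking, so the details require combining the extremal equations with carefully chosen neighbour equations that exploit the asymmetry between $\alpha$ and each $\beta_i=\alpha-\alpha_i\epsilon_i$ under the generic-$\mu$ pairing.
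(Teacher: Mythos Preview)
Your strategy (induction on $|I(\alpha)|$, test element $y=t^\alpha d_\mu+\sum_{i\in I}t^{\alpha-\alpha_i\epsilon_i}d_\mu$, coset reduction to the lattice $L$) is genuinely different from the paper's approach and the final algebra is correct, but the boundary analysis as you present it has a real gap. Your lex-maximal argument is fine and shows that the lex-max of the support is either $\vec 0$ or $\vec 1$ with $D_{\vec 1}=c_1 d_\mu$. However, this constrains only the single top point; it does \emph{not} reduce the support to $\{\vec 0,\vec 1\}$, so your ``secondary equation at $\vec 1-\vec e_j$'' is not yet available---at the lattice point $\vec 2-\vec e_j$ you would also pick up contributions from $D_{\vec 1-\vec e_j+\vec e_l}$ for $l\neq j$, which are uncontrolled. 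You recognise this yourself in the final paragraph, but it is the heart of the argument, not a detail. A clean fix is to replace $a_{[0]}$ by $a_{[0]}-c_1 y$ (which leaves $[\,\cdot\,,y]$ unchanged), so that the new $D_{\vec 1}$ vanishes; the lex-max argument then forces the new support to lie in $\{\vec k:\vec k\le_{\mathrm{lex}}\vec 0\}$, and a parallel lex-\emph{min} argument (reading the coefficient at $\vec P+\vec 1-\vec e_1$ where $\vec P$ is lex-minimal and $\vec e_1$ is the first index in $I$) pins $\vec P$ to $\vec 0$. That completes the collapse to $\{\vec 0\}$ and your final computation goes through.

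For comparison, the paper avoids this multi-index bookkeeping entirely by choosing a different test element: it fixes an integer $m>\max_i|\alpha_i|$ and applies $\Delta$ to $t^\alpha d_\mu+\sum_{i:\alpha_i\ge 0}t_i^{m}d_\mu+\sum_{i:\alpha_i<0}t_i^{-m}d_\mu$. Because $m$ is large, the extremal degree in each variable $t_i$ \emph{separately} is carried by the monomial $t_i^{\pm m}$, and a one-variable-at-a-time analysis (Claims~1--3) pins down the structure of $a$ directly, with no induction on $|I|$ and no lattice combinatorics. Your inductive approach is more conceptual and reuses the $\beta_i$'s naturally, but the paper's choice of test element makes the extremal analysis essentially one-dimensional and hence considerably shorter.
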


\begin{proof}
 From Lemma \ref{lem33}, we have
$$\Delta(t^{m\epsilon_i} d_\mu)= 0, \forall 1\le i\le n,  m\in \mathbb{Z}.$$
We may assume that $n>1$ and $\alpha\in \mathbb{Z}^n\setminus \cup_{1\le i\le n} \mathbb{Z}\epsilon_i$.
Take an fixed integer $m> |\alpha_i|$ for any $1\leq i\leq n$. Define
$$I=\{i:\alpha_i\geq 0\}\ \text{and}\ I'=\{i: \alpha_i< 0\}.$$
Then by Lemma \ref{lem32} and Lemma \ref{lem33} there is  $c\in \mathbb{C}$ and an element $ x=\sum_{\beta\in \mathbb{Z}^n} t^\beta d_\beta\in W_n$ such that
\begin{equation}\aligned ct^\alpha d_\mu&= \Delta(t^\alpha d_\mu)= \Delta(t^\alpha d_\mu+ \sum_{i\in I}t_i^m d_\mu+ \sum_{i\in I'}t_i^{-m} d_\mu)\\
&= [\sum_{\beta\in \mathbb{Z}^n} t^\beta d_\beta, t^\alpha d_\mu+ \sum_{i\in I}t_i^m d_\mu+ \sum_{i\in I'}t_i^{-m} d_\mu].\endaligned\end{equation}

{\bf Claim 1.} If $[t^\gamma d_\gamma, t_i^k d_\mu]= 0$ for some $1\leq i\leq n$, where $\gamma\in \mathbb{Z}^n\setminus\{0\}$, $d_\gamma\in \mathfrak{h}\setminus\{0\}$ and $k\in \mathbb{Z}$,
then $t^\gamma d_\gamma= c_it_i^k d_\mu$ for some $c_i\in\mathbb{C}^*$.

Since
$$[t^\gamma d_\gamma, t_i^k d_\mu]= t^{\gamma+ k\epsilon_i}((d_\gamma, k\epsilon_i)d_\mu- (d_\mu, \gamma)d_\gamma)= 0.$$
and $(d_\mu, \gamma)\neq 0$, we have $d_\gamma= c_i d_\mu$ for some $c_i\in \mathbb{C}^*$ and furthermore
$$(d_\gamma, k\epsilon_i)d_\mu- (d_\mu, \gamma)d_\gamma= (c_i d_\mu, k\epsilon_i)d_\mu- (d_\mu, \gamma)c_i d_\mu= c_i(d_\mu, k\epsilon_i- \gamma)d_\mu= 0.$$
It implies that $\gamma= k\epsilon_i$. Claim 1 follows.

Take a nonzero term $t^\gamma d_\gamma$ in the expression of $x$ with maximal degree with respect to  $t_i$ for some $i\in I$.

{\bf Claim 2.} If $\gamma_i\ge 0$ and $\gamma\ne0$, then $t^\gamma d_\gamma=c_it_i^m d_\mu$ for some $c_i\in\mathbb{C}^*$.

The term $[t^\gamma d_\gamma, t_i^m d_\mu]$ is of maximal degree with respect to $t_i$ in the expression of
$$[\sum_{\beta\in \mathbb{Z}^n} t^\beta d_\beta, t^\alpha d_\mu+ \sum_{i\in I}t_i^m d_\mu+ \sum_{i\in I'}t_i^{-m} d_\mu].$$
Since $\gamma+m\epsilon_i\ne\alpha$, it follows that $[t^\gamma d_\gamma, t_i^m d_\mu]= 0$. We see that $t^\gamma d_\gamma= c_it_i^m d_\mu$ for some $c_i\in\mathbb{C}^*$ by Claim 1.

Similarly,  a nonzero term $t^\gamma d_\gamma$ in the expression of $x$ with minimal degree with respect to  $t_i$  for  $i\in I'$  is $c_it_i^{-m} d_\mu$ for some  $c_i\in\mathbb{C}^*$ if  $\gamma_i\le 0$ and $\gamma\ne0$.
Moreover, $c_it_i^m d_\mu$ (resp. $c_it_i^{-m} d_\mu$) is the only possible term of   non-negative  maximal degree (resp. non-positive minimal degree) with respect to $t_i$, $i\in I$ (resp. $i\in I'$) in $x$. If there is such a term, we consider $c_it_i^m d_\mu$, $i\in I$ without loss of generality. Now, to delete the term $[c_it_i^m d_\mu, t^\alpha d_\mu]= c_i(d_\mu, \alpha- m\epsilon_i)t^{\alpha+ m\epsilon_i} d_\mu\neq 0$ in
$$[\sum_{\beta\in \mathbb{Z}^n} t^\beta d_\beta, t^\alpha d_\mu+ \sum_{i\in I}t_i^m d_\mu+ \sum_{i\in I'}t_i^{-m} d_\mu],$$
there must be $c_it^\alpha d_\mu$ in $\sum_{\beta\in \mathbb{Z}^n} t^\beta d_\beta$. To delete the term $[c_it^\alpha d_\mu, t_j^m d_\mu]\neq 0$, $i\neq j\in I$ (resp. $[c_it^\alpha d_\mu, t_j^{-m} d_\mu]$, $j\in I'$) in
$$[\sum_{\beta\in \mathbb{Z}^n} t^\beta d_\beta, t^\alpha d_\mu+ \sum_{i\in I}t_i^m d_\mu+ \sum_{i\in I'}t_i^{-m} d_\mu],$$
there must be $c_it_j^m d_\mu$, $i\neq j\in I$ (resp. $c_it_j^{-m} d_\mu$, $j\in I'$).
Thus $c_i=c_1$ for all $1\le i\le n$.

Let $x'= x- c_1(t^\alpha d_\mu+ \sum_{i\in I}t_i^m d_\mu+ \sum_{i\in I'}t_i^{-m} d_\mu)$. Then
\begin{equation}
ct^\alpha d_\mu= [x', t^\alpha d_\mu+ \sum_{i\in I}t_i^m d_\mu+ \sum_{i\in I'}t_i^{-m} d_\mu].
\end{equation}
Note that, if $t^\gamma d_\gamma$ is a nonzero  term in the expression of $x'$, then $\gamma=0$ or \begin{equation}
\left\{\begin{matrix}
\gamma_i <0 &\text{ if }i\in I, \\
\gamma_i >0& \text{ if }i\in I'.
\end{matrix}\right.
\end{equation}

{\bf Claim 3.}  In the expression of $x'$, the term $t^0d_0=0$.

If $d_0\ne0$,
considering the highest (resp. lowest) degree term with respect to $t_i$ for $i\in I$ (resp. $i\in I'$) in (3.20), we deduce that $(d_0, \epsilon_i)=0$  for all $1\le i\le n$. Thus $d_0=0$. Claim 3 follows.

Suppose that there exists a nonzero term $t^\gamma d_\gamma$ in the expression of $x'$ with maximal degree with respect to some $t_i$ for $i\in I$. Then $\gamma\neq 0$, and the term $[t^\gamma d_\gamma, t_i^m d_\mu]$ is of maximal degree with respect to $t_i$ in the expression of
$$[x', t^\alpha d_\mu+ \sum_{i\in I}t_i^m d_\mu+ \sum_{i\in I'}t_i^{-m} d_\mu].$$
It is only possible that $0\neq [t^\gamma d_\gamma, t_i^m d_\mu]\in t^\alpha\mathfrak{h}$ by Claim 1.
Then $\gamma+ m\epsilon_i= \alpha$. We have $0> \gamma_j= \alpha_j\geq 0$, $i\neq j\in I$ and $0< \gamma_j= \alpha_j< 0$, $i\neq j\in I'$ by (3.21), a contradiction. So $x'= 0$. Therefore $\Delta(t^\alpha d_\mu)= 0$.\end{proof}

\begin{lemma}\label{lem35} \textit{Let $\Delta$ be a local derivation
on $W_n$ such that $\Delta(d_\mu)=0$.
Then $\Delta\mid_\mathfrak{h}= 0$}.
\end{lemma}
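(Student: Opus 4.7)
The plan is to fix $h \in \mathfrak{h}$ and show that $\Delta(h) = 0$. Since every derivation of $W_n$ is inner, the local-derivation property gives $\Delta(h) = [a_h, h]$ for some $a_h = \sum_\beta t^\beta D_\beta \in W_n$ with $D_\beta \in \mathfrak{h}$; a direct computation yields $[a_h, h] = -\sum_\beta (h,\beta)\, t^\beta D_\beta$, and the $\beta = 0$ summand vanishes because $(h,0) = 0$. Therefore $\Delta(h)$ has no Cartan component; writing $\Delta(h) = \sum_{\gamma \in S} t^\gamma H_\gamma$ with $S \subset \mathbb{Z}^n \setminus \{0\}$ finite and $H_\gamma \in \mathfrak{h} \setminus \{0\}$, the task reduces to proving $S = \emptyset$.

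Suppose for contradiction that $S \ne \emptyset$, and choose $M \in \mathbb{N}$ with $M > \max\{|\gamma_i| : \gamma \in S,\ 1 \le i \le n\}$. Put
$$y := h + \sum_{i=1}^n \bigl(t_i^M + t_i^{-M}\bigr) d_\mu.$$
By Lemma~\ref{lem32} there are scalars $\phi_i^\pm \in \mathbb{C}$ with $\Delta(t_i^{\pm M} d_\mu) = \phi_i^\pm t_i^{\pm M} d_\mu$, so by linearity
$$\Delta(y) = \Delta(h) + \sum_{i=1}^n \phi_i^+ t_i^M d_\mu + \sum_{i=1}^n \phi_i^- t_i^{-M} d_\mu.$$
The local-derivation property also produces $b = \sum_\beta t^\beta B_\beta \in W_n$ with $B_\beta \in \mathfrak{h}$ such that $\Delta(y) = [b, y]$, and the core of the proof is to pin down $\mathrm{supp}(b)$.

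The central claim is that $\mathrm{supp}(b) \subseteq \{0\} \cup \{\pm M\epsilon_i : 1 \le i \le n\}$ with $B_{\pm M\epsilon_i} \in \mathbb{C} d_\mu$ whenever nonzero. This will be established by exactly the strategy of Claims~1--3 in the proof of Lemma~\ref{lem34}. Fix $k$ and pick $\gamma \in \mathrm{supp}(b) \setminus \{M\epsilon_k\}$ with $\gamma_k$ maximal. Suppose $\gamma_k > 0$; then $[t^\gamma B_\gamma, t_k^M d_\mu]$ contributes at the monomial $t^{\gamma + M\epsilon_k}$, whose $k$-th exponent is $\gamma_k + M > M$. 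By the choice of $M$ this monomial cannot appear in $\Delta(y)$, and by maximality no other bracket from $[b, y]$ can reach it either, so $[t^\gamma B_\gamma, t_k^M d_\mu] = 0$. The computation in Claim~1 of the proof of Lemma~\ref{lem34} then forces $\gamma = M\epsilon_k$, contradicting the choice of $\gamma$. Hence $\gamma_k \le 0$ for every $\gamma \in \mathrm{supp}(b) \setminus \{M\epsilon_k\}$, and the symmetric argument with $t_k^{-M} d_\mu$ gives $\gamma_k \ge 0$ for every $\gamma \in \mathrm{supp}(b) \setminus \{-M\epsilon_k\}$. Running this over all $k$ and intersecting yields the claim.

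With $b = B_0 + \sum_i c_i^+ t_i^M d_\mu + \sum_i c_i^- t_i^{-M} d_\mu$ in hand, the monomials that can appear in $[b, y]$ lie in $\{0\} \cup \{\pm M\epsilon_i\} \cup \{\pm M\epsilon_i \pm M\epsilon_j : i \ne j\}$. By the choice of $M$, no element of $S$ belongs to this set, so for every $\gamma \in S$ the $t^\gamma$-coefficient of the right-hand side of $\Delta(y) = [b, y]$ vanishes, whereas the left-hand side contributes $H_\gamma \ne 0$. This contradiction forces $S = \emptyset$ and hence $\Delta(h) = 0$. The main obstacle throughout is the iterated maximality argument isolating $\mathrm{supp}(b)$; once that is done, the conclusion follows immediately from the size constraint built into the choice of $M$.
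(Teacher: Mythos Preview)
Your argument is correct, but it takes a genuinely different route from the paper's proof. The paper never invokes Lemma~\ref{lem32} here; instead it works entirely inside $\mathfrak{h}$. Writing $\Delta(d_i)=\sum_{\alpha\ne 0}\sum_j c^{(i)}_{\alpha,j}t^\alpha d_j$, the paper applies the local-derivation property to the special Cartan element $d=\alpha_k d_i-\alpha_i d_k$ (chosen so that $(d,\alpha)=0$), observes that $[t^\alpha d_j,d]=0$ and hence $(\Delta(d))_{t^\alpha d_j}=0$, and deduces the proportionality $c^{(i)}_{\alpha,j}=\alpha_i c_{\alpha,j}$. Feeding this into $\Delta(d_\mu)=0$ and using genericity of $\mu$ kills all coefficients. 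This is shorter, uses only the $\mathbb{Z}^n$-grading, and is logically independent of Lemma~\ref{lem32}. Your approach, by contrast, recycles the machinery of Lemmas~\ref{lem32} and~\ref{lem34}: you perturb $h$ by $\sum_i(t_i^M+t_i^{-M})d_\mu$ with $M$ large, use Lemma~\ref{lem32} to control $\Delta$ on the perturbation, and then run the extremal-degree argument to pin down $\mathrm{supp}(b)$. This is more laborious but has the virtue of treating an arbitrary $h\in\mathfrak{h}$ at once and of being methodologically uniform with the surrounding lemmas. Two minor remarks: your assertion that $B_{\pm M\epsilon_i}\in\mathbb{C}d_\mu$ is not actually argued in your sketch and is not needed for the final contradiction (the support constraint alone suffices); and your list of monomials in $[b,y]$ omits $\pm 2M\epsilon_i$, though these too have a coordinate of absolute value $\ge M$ and so cannot lie in $S$.
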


\begin{proof} This is trivial for $n=1$. Next we assume that $n>1$.
For a given $i\in \{1,\dots,n\}$, there is  an element $\sum_{\alpha\in \mathbb{Z}^n} t^\alpha d_\alpha^{(i)}\in W_n$, where $d_\alpha^{(i)}\in \mathfrak{h}$ such that
$$\Delta(d_i)= [\sum_{\alpha\in \mathbb{Z}^n} t^\alpha d_\alpha^{(i)}, d_i]= -\sum_{\alpha\in \mathbb{Z}^n} \alpha_i t^\alpha d_\alpha^{(i)}.$$
So we may suppose that
$$\Delta(d_i)= \sum_{\alpha\in \mathbb{Z}^n\setminus\{0\}}\sum_{1\leq j\leq n} c^{(i)}_{\alpha,j}t^\alpha d_j,$$
where $ c^{(i)}_{\alpha,j}\in\mathbb{C}$.
For a given $\alpha\in \mathbb{Z}^n\setminus\{0\}$, we have $\alpha_k\neq 0$ for some $1\leq k \leq n$. Let $c_{\alpha, j}= c^{(k)}_{\alpha,j}/\alpha_k$.
For $i\neq k$, let $d= \alpha_k d_i- \alpha_i d_k$. We have
$$(\Delta(d))_{t^\alpha d_j}= \alpha_k c^{(i)}_{\alpha,j}- \alpha_i c^{(k)}_{\alpha,j}.$$
On the other hand, there is an element $\sum_{\alpha\in \mathbb{Z}^n}\sum_{1\leq j\leq n} b_{\alpha,j}t^\alpha d_j\in W_n$ where $b_{\alpha,j}\in\mathbb{C}$
such that
$$(\Delta(d))_{t^\alpha d_j}= ([\sum_{\alpha\in \mathbb{Z}^n}\sum_{1\leq j\leq n} b_{\alpha,j}t^\alpha d_j, \alpha_k d_i- \alpha_i d_k])_{t^\alpha d_j}
= -b_{\alpha,j}\alpha_k\alpha_i+ b_{\alpha,j}\alpha_i\alpha_k= 0.$$
Then $c^{(i)}_{\alpha,j}= \alpha_i(c^{(k)}_{\alpha,j}/\alpha_k)= \alpha_i c_{\alpha,j}$, yielding that
$$0=( \Delta(d_\mu))_{t^\alpha d_j}= \sum_{i=1}^n\mu_i c^{(i)}_{\alpha,j}= c_{\alpha,j}\sum_{i=1}^n\mu_i\alpha_i.$$
We deduce that $c_{\alpha,j}= 0$. So $$c^{(i)}_{\alpha,j}= 0,\,\, \forall   1\leq i,j\leq n, \alpha\in\mathbb{Z}^n\setminus\{0\},$$   that is,  $\Delta(d_i)=0$ for $1\leq i\leq n$. Therefore $\Delta\mid_\mathfrak{h}= 0$.
\end{proof}

Now we are in the position to prove Theorem \ref{thm31}.

\

\textit{Proof of Theorem} \ref{thm31}. Let $\Delta$ be a local derivation on $W_n$. We fix an arbitrary  generic $\mu\in\mathbb{C}^n$. There is an element $a\in W_n$ such that $\Delta(d_\mu)=[a, d_\mu]$.
Set $\Delta_1= \Delta- \text{ad}(a)$. Then $\Delta_1$ is a local derivation such that $\Delta_1(d_\mu)= 0$. From Lemma \ref{lem35}, we know that $\Delta_1(\mathfrak{h})=0$.
By Lemma \ref{lem32}, there are $c_i\in \mathbb{C}$ such  that
$$\Delta_1(t_i d_\mu)= c_i t_i d_\mu,\,\,\forall 1\leq i\leq n.$$
Set $\Delta_2= \Delta_1- \sum_{i=1}^n c_i \text{ad}(d_i)$. Then $\Delta_2$ is a local derivation such that $$\Delta_2(\mathfrak{h})=0, \text{ and }\Delta_2(t_i d_\mu)= 0,\,\,\forall 1\leq i\leq n.$$
By Lemma \ref{lem34}, for any generic $\mu\in\mathbb{C}^n$
we have $$\Delta_2(t^\alpha d_\mu)= 0,\,\,\forall \alpha\in\mathbb{Z}^n.$$
For any generic $\lambda\in\mathbb{C}^n$ that is not a multiple of $\mu$ since $\Delta_2(d_{\lambda})=0$, from Lemma \ref{lem32}
there is $c_\alpha\in \mathbb{C}$ such that  $\Delta_2(t^\alpha d_{\lambda})= c_\alpha t^\alpha d_{\lambda}$.
There is $\sum_{\beta\in \mathbb{Z}^n}t^\beta d_\beta$, where $d_\beta\in\mathfrak{h}$,  such that
$$\aligned c_\alpha t^\alpha d_{\lambda}=& \Delta_2(t^\alpha d_\alpha)= \Delta_2(t^\alpha d_\mu+ t^\alpha d_{\lambda})
=[\sum_{\beta\in \mathbb{Z}^n}t^\beta d_\beta, t^\alpha d_\mu+ t^\alpha d_{\lambda}]\\
=&\sum_{\beta\in \mathbb{Z}^n} t^{\alpha+\beta}((d_\beta, \alpha)(d_\mu+ d_{\lambda})-(d_\mu+ d_{\lambda},\beta)d_\beta).\endaligned$$
We see that $c_\alpha t^\alpha d_{\lambda}=(d_0, \alpha)(d_\mu+ d_{\lambda})$, yielding that
  $c_\alpha= 0$. Thus  for any generic
 vector $\lambda$,
 $$\Delta_2(t^\alpha d_{\lambda})= 0,\,\,\forall \alpha\in\mathbb{Z}^n.$$
Since the set $\{d_{\lambda}:\lambda\ \text{is generic}\}$ can span $\mathfrak{h}$ we must have  $\Delta_2= 0$.
Hence $\Delta= \text{ad}(a)+ \sum_{i=1}^n c_i \text{ad}(d_i)$ is a derivation. The proof is completed.
\hfill$\Box$

By Theorem 3.4 in \cite{DZ1} any derivation on the generalized Virasoro algebra $W_n(\mu)$ can be seen as the restriction
of a inner derivation on $W_n$.  All the proofs in this section with minor modifications are valid
for the generalized Virasoro algebra $W_n(\mu)$. Therefore
we obtain the following consequence.

\begin{corollary}\label{lem36} \textit{
Let  $n\in\mathbb{N}$, and let $\mu\in\mathbb{C}^n$ be generic. Then any local derivation
on the generalized Virasoro algebra $W_n(\mu)$  is a derivation.
}
\end{corollary}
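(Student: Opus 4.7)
The plan is to mirror the proof of Theorem \ref{thm31}, using Theorem 3.4 of \cite{DZ1} as the bridge: it tells us that every derivation of $W_n(\mu)$ is the restriction of $\text{ad}(a)$ for some $a\in W_n$, so a local derivation $\Delta$ on $W_n(\mu)$ satisfies $\Delta(x)=[a_x,x]$ for some $a_x\in W_n$ depending on $x$. A short computation using the genericity of $\mu$ shows that the requirement $[a_x,t^\alpha d_\mu]\in A_n d_\mu$ for all $\alpha$ forces $a_x$ to have the special shape $a_x=d_0+\sum_{\beta\neq 0}c_\beta t^\beta d_\mu$ with $d_0\in\mathfrak{h}$ and $c_\beta\in\mathbb{C}$. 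This is the key simplification: every Cartan vector indexed by a nonzero weight appearing in a bracket expansion is automatically forced to be a scalar multiple of $d_\mu$.

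With this observation, Lemmas \ref{lem32'}, \ref{lem32''} and \ref{lem32} adapt essentially verbatim. The eliminations in the auxiliary scalar $x$ are logically unchanged; each relation that previously lived in $\mathfrak{h}$ now collapses to a single equation in $\mathbb{C}$, but the conclusion is the same. Thus, after reducing to $\Delta(d_\mu)=0$ by subtracting an inner derivation, one obtains $\Delta(t^\alpha d_\mu)\in\mathbb{C}t^\alpha d_\mu$ for every $\alpha\in\mathbb{Z}^n$. Lemmas \ref{lem33} and \ref{lem34} transpose in exactly the same way, yielding: if in addition $\Delta(t_i d_\mu)=0$ for all $1\le i\le n$, then $\Delta(t^\alpha d_\mu)=0$ for every $\alpha\in\mathbb{Z}^n$. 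Lemma \ref{lem35} has no nontrivial analog to prove here, since the only ``Cartan'' element of $W_n(\mu)$ is $d_\mu$ itself, which is already assumed to be killed.

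The final assembly mirrors the end of the proof of Theorem \ref{thm31}. Pick $a\in W_n$ with $\Delta(d_\mu)=[a,d_\mu]$ and set $\Delta_1:=\Delta-\text{ad}(a)|_{W_n(\mu)}$ to kill $d_\mu$. Apply the analog of Lemma \ref{lem32} to obtain scalars $c_i$ with $\Delta_1(t_i d_\mu)=c_i t_i d_\mu$. Since $\text{ad}(d_i)|_{W_n(\mu)}$ is the derivation sending $t^\alpha d_\mu\mapsto \alpha_i t^\alpha d_\mu$ and hence a restriction of an inner derivation of $W_n$, the map $\Delta_2:=\Delta_1-\text{ad}\bigl(\sum_{i=1}^n c_i d_i\bigr)|_{W_n(\mu)}$ is again a local derivation and kills both $d_\mu$ and every $t_i d_\mu$. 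The analog of Lemma \ref{lem34} then forces $\Delta_2(t^\alpha d_\mu)=0$ for all $\alpha\in\mathbb{Z}^n$, so $\Delta_2=0$ and $\Delta$ is a derivation.

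The step most worth checking carefully is that each scalar-valued elimination in the transposed lemmas still produces a contradiction for a generic choice of the auxiliary $x\in\mathbb{C}^*$, exactly as in the $W_n$ case; since those arguments rest only on the genericity of $\mu$ and never on the existence of multiple independent Cartan directions, I expect no real obstacle. One genuine simplification relative to Theorem \ref{thm31} is that the final paragraph of its proof, where a second generic $\lambda$ was varied to span $\mathfrak{h}$, has no analog here: $W_n(\mu)$ already lives in a single Cartan direction, so once every $t^\alpha d_\mu$ is handled the proof is complete.
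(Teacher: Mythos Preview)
Your proposal is correct and follows essentially the same approach as the paper: invoke Theorem~3.4 of \cite{DZ1} to reduce to restrictions of inner derivations of $W_n$, then transpose Lemmas \ref{lem32'}--\ref{lem34} with the simplification that every $d'_\beta$ with $\beta\neq 0$ is forced into $\mathbb{C}d_\mu$. Your explicit observations that Lemma \ref{lem35} becomes vacuous and that the final ``second generic $\lambda$'' step of Theorem \ref{thm31} is unnecessary are exactly the ``minor modifications'' the paper alludes to without spelling out.
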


\section{Local derivations on $W_n^+$ and $W_n^{++}$}

For  $n\in\mathbb{N}$, we have the Witt algebra $W_n^+=\text{Der}(\mathbb{C}[t_1 ,t_2 ,\cdots, t_n])$ which is a subalgebra of $W_n$. We use $ \mathfrak{h}$ to denote the Cartan subalgebra of $W_n$ which is also a  Cartan subalgebra (not unique) of $W_n^+$. We know that
$$W_n^+= \sum_{\alpha\in \mathbb{Z}_+^n} t^\alpha \mathfrak{h}+ \sum_{i=1}^n \mathbb{C} t_i^{-1} d_i.$$
Furthermore $W_n^+$ has a subalgebra
$$W_n^{++}= \sum_{\alpha\in \mathbb{Z}_+^n} t^\alpha \mathfrak{h}.$$
It is well-known that $W_n^+$ is a simple Lie algebra, but $W_n^{++}$ is not.

From Proposition 4.1 and Theorem 4.3 in \cite{DZ} we know that any derivation on $W_n^+$ is inner .
Using same arguments as the proof of  Proposition 3.3 in \cite{DZ1} we can show that
any derivation on  $W_n^{++}$ is inner.

Hence, the proofs and conclusions with slight modifications  in Section 3 are applicable to $W_n^+$ and $W_n^{++}$. It is routine to verify this. We omit the details and directly    state   the following theorem.

\begin{theorem}\label{thm51} \textit{Every local derivation
on Witt algebras $W_n^+$ or $\ W_n^{++}$ is a derivation.}
\end{theorem}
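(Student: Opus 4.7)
The proof plan closely follows the scheme of Theorem~\ref{thm31}, starting from the observation just before the statement that every derivation on $W_n^+$ or $W_n^{++}$ is inner. Thus a local derivation $\Delta$ satisfies $\Delta(x) = [a_x, x]$ for some $a_x$ in the algebra. Fixing a generic $\mu \in \mathbb{C}^n$, I would pick $a$ in the algebra with $\Delta(d_\mu) = [a, d_\mu]$ and replace $\Delta$ by $\Delta - \text{ad}(a)$ to reduce to the case $\Delta(d_\mu) = 0$.

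The bracket formula $[t^\alpha d_\alpha, t^\beta d_\beta] = t^{\alpha+\beta}((d_\alpha,\beta)d_\beta - (d_\beta,\alpha)d_\alpha)$ is intrinsic, so Lemmas~\ref{lem32'}--\ref{lem32} transfer almost verbatim. For $W_n^{++}$, the implementing element in~(3.1) has $\beta \in \mathbb{Z}_+^n$, which forces the integer range $[p_\gamma,q_\gamma]$ to be automatically bounded below; this only restricts the cases one must examine. The polynomial-in-$x$ contradictions derived from~(3.5), (3.9), (3.10) and~(3.13) require only that the system contain at least two equations of the given form, so they continue to force $F = \{[0]\}$ and $p = q = 1$. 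This yields $\Delta(t^\alpha d_\mu) \in \mathbb{C} t^\alpha d_\mu$ for every $\alpha \in \mathbb{Z}_+^n$. Lemma~\ref{lem35} likewise carries over, since its coefficient identities involve only $\alpha \in \mathbb{Z}_+^n \setminus \{0\}$ and rely solely on the genericity of $\mu$.

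In Lemma~\ref{lem34}, the probe element $t^\alpha d_\mu + \sum_{i \in I} t_i^m d_\mu + \sum_{i \in I'} t_i^{-m} d_\mu$ simplifies considerably: for $\alpha \in \mathbb{Z}_+^n$ we have $I' = \emptyset$, so only positive powers appear and the probe lies in $W_n^{++} \subset W_n^+$. The maximal-degree analysis of Claims~1--3 is then unchanged. Combined with the final step of the proof of Theorem~\ref{thm31} (absorbing the scalars $c_i$ by subtracting $\sum_i c_i \text{ad}(d_i)$ and then using that $\{d_\lambda : \lambda \text{ is generic}\}$ spans $\mathfrak{h}$), this gives $\Delta = 0$ for $W_n^{++}$.

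For $W_n^+$ the only additional subtlety — and the main obstacle I anticipate — is the extra generators $t_i^{-1} d_i$, which lie outside the $\mathbb{Z}_+^n$-graded part. After running the above program one must also show $\Delta(t_i^{-1} d_i) = 0$. The strategy is to compare $\Delta(t_i^{-1} d_i)$ with $\Delta(t_i^{-1} d_i + x d_\mu)$ for varying $x \in \mathbb{C}^*$; a supplementary lemma mirroring Lemmas~\ref{lem32'}--\ref{lem32} pins the support of $\Delta(t_i^{-1} d_i)$ to a scalar multiple of $t_i^{-1} d_i$ itself, which can then be absorbed by a further inner adjustment using $\text{ad}(t_i^{-1} d_i)$. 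Once this step is in place, the concluding argument closes exactly as in Section~3 and yields Theorem~\ref{thm51} for both $W_n^+$ and $W_n^{++}$.
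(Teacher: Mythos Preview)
Your plan matches the paper's own treatment, which simply asserts that the Section~3 proofs carry over ``with slight modifications'' and gives no further detail; for $W_n^{++}$ your outlined adaptations are sound and nothing more is needed.

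For $W_n^+$, however, the last step contains a concrete error. You propose to absorb a residual scalar $\Delta_2(t_i^{-1}d_i)=c_i\,t_i^{-1}d_i$ by a further inner adjustment using $\operatorname{ad}(t_i^{-1}d_i)$. But $[t_i^{-1}d_i,\,t_i^{-1}d_i]=0$, so $\operatorname{ad}(t_i^{-1}d_i)$ annihilates $t_i^{-1}d_i$ and absorbs nothing. Nor can you switch to $\operatorname{ad}(d_i)$ (which does satisfy $[d_i,t_i^{-1}d_i]=-t_i^{-1}d_i$): at this point $\Delta_2$ already vanishes on $t_id_\mu$ and on all of $W_n^{++}$, and subtracting a nonzero multiple of $\operatorname{ad}(d_i)$ would destroy those vanishings. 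The scalar $c_i$ must therefore be forced to zero directly, not absorbed. One workable route is another probe in the spirit of Lemma~\ref{lem33}: apply $\Delta_2$ to $t_i^{-1}d_i+t_i^2d_i$, write the result as $[a,\,t_i^{-1}d_i+t_i^2d_i]=c_i\,t_i^{-1}d_i$ with $a\in W_n^+$, and observe that matching coefficients degree by degree produces a three-term recursion on the coefficients of $a$ that never terminates unless $c_i=0$. You should also note that for $n\ge 2$ the algebra $W_n^+$ contains more ``negative'' elements than just the $t_i^{-1}d_i$ (namely all $t^\beta d_i$ with $\beta_i=-1$ and $\beta_j\ge 0$ for $j\ne i$), so the supplementary argument must cover those as well.
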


\vspace{2mm}
\noindent
{\bf Acknowledgements. } This research is partially supported by NSFC (11871190) and NSERC (311907-2015).

\end{document}